\newcommand{\R}
 {\mathbb{R}}
\begin{document}

\title[Correctors for the Neumann problem in thin domains]{Correctors for the Neumann problem in thin domains with locally periodic oscillatory structure}

\author[M. C. Pereira]{Marcone C. Pereira$^\dag$}
\thanks{$^\dag$Partially
supported by CNPq 305210/2008-4 and FAPESP 2008/53094-4 and 2010/18790-0, Brazil}
\address[Marcone C. Pereira]{Escola de Artes, Ci\^encias e Humanidades,
Universidade de S\~ao Paulo, S\~ao Paulo SP, Brazil}
\email{marcone@usp.br}

\author[R. P. Silva]{Ricardo P. Silva$^\star$}
\thanks{$^\star$Partially
supported by FUNDUNESP 0135812 and FAPESP 2008/53094-4 and 2012/06753-8, Brazil}
\address[Ricardo P. Silva]{Instituto de Geoci\^{e}ncias e Ci\^{e}ncias Exatas, Universidade Estadual Paulista, Rio Claro SP, Brazil}
\email{rpsilva@rc.unesp.br}

\date{}

\subjclass[2010]{35B25, 35B27, 74Kxx.} 
\keywords{thin domains, correctors, boundary oscillation, homogenization.} 

\begin{abstract}
In this paper we are concerned with convergence of solutions of the Poisson equation with Neumann boun\-da\-ry conditions in a two-dimensional thin domain exhibiting highly oscillatory behavior in part of its boun\-dary. We deal with the resonant case in which the height, amplitude and period of the oscillations are all of the same order  which is given by a small parameter $\epsilon > 0$. 
Applying an appropriate corrector approach we get strong convergence when we replace the original solutions by a kind of first-order expansion through the Multiple-Scale Method.
\end{abstract}

\maketitle
\numberwithin{equation}{section}
\newtheorem{theorem}{Theorem}[section]
\newtheorem{lemma}[theorem]{Lemma}
\newtheorem{corollary}[theorem]{Corollary}
\newtheorem{proposition}[theorem]{Proposition}
\newtheorem{remark}[theorem]{Remark}
\allowdisplaybreaks

\section{Introduction}

Given a small positive parameter $\epsilon$, our goal in this work is to analyze the convergence of the family of solutions $w^\epsilon \in H^{1}(R^{\epsilon})$ of the 
Neumann boundary problem
%
\begin{equation} \label{P}
\left\{
\begin{gathered}
- \Delta w^\epsilon + w^\epsilon = f^\epsilon
\quad \textrm{ in } R^\epsilon, \\
\frac{\partial w^\epsilon}{\partial \nu ^\epsilon} = 0
\quad \textrm{ on } \partial R^\epsilon,
\end{gathered}
\right. 
\end{equation}
where $f^\epsilon \in L^2(R^\epsilon)$ is uniformly bounded which respect to $\epsilon$, $\nu^\epsilon = (\nu^\epsilon_1, \nu^\epsilon_2)$ denotes the unit outward normal vector field to $\partial R^\epsilon$ and $\frac{\partial }{\partial \nu^\epsilon}$ denotes the outside normal derivative. The domain $R^\epsilon \subset \R^{2}$ is a thin domain with a highly oscillating boundary which is given by
\begin{equation} \label{TD}
R^\epsilon = \{ (x_1,x_2) \in \R^2 \, : \,  x_1 \in (0,1),  \, -\epsilon \, b(x_1) < x_2 < \epsilon \, G_\epsilon(x_1) \},
\end{equation}
where $b$ and $G_\epsilon$ are smooth functions, uniformly bounded by positive constants $b_1$, $G_0$, and $G_1$, independent of $\epsilon$, satisfying $0 \leq b(x) \leq b_1$ and $0<G_0\leq G_\epsilon(x)\leq G_1$ for all $x \in [0,1]$. 
Noticing  that $R^\epsilon \subset (0,1) \times (- \epsilon \, b_1, \epsilon \, G_1)$ for all $\epsilon>0$, we see that the domain $R^\epsilon$ is actually thin on the $x_{2}$-direction. Therefore we will say that $R^\epsilon$ is collapsing to the interval $(0,1)$ as $\epsilon \to 0$.

Here, the function $b$, independent on $\epsilon$, establishes the lower boundary of the thin domain, and the function $G_\epsilon$, dependent on the parameter $\epsilon$, defines its upper boundary. Also, allowing $G_\epsilon$ to present oscillatory behavior we obtain an oscillating thin domain $R^\epsilon$, whose amplitude and period of the oscillations are of the same order $\epsilon$, which also coincides with the order of its thickness.
For instance, we may think the function $G_\epsilon$ of the form $G_\epsilon(x)=m(x)+k(x)g(x/\epsilon)$, where $m, k:[0,1] \to \R$ are ${C}^1$-functions and $g:\R\to \R$ is a $L$-periodic and smooth function. 
In this case, it is clear that the functions $m$ and $b$ define the profile of the thin domain, as well as $k$ the amplitude of the oscillations which is given by the periodic function $g$. This includes the case where the function $G_\epsilon$ is a purely periodic function, as $G_\epsilon(x)=2+\sin(x/\epsilon)$, but also includes the case where the function $G_\epsilon$ exhibit a profile and amplitude of oscillations dependent on $x \in (0,1)$. We can refer the reader to \cite{CFP1} for an example of this kind of locally periodic structure. In fact, we are able to treat even more general cases, see hypotheses $(H1)$ and $(H2)$ in the Section \ref{PRE}.
Figure 1 below illustrates an example of such a thin domain that we are able to treat in this work.

\begin{figure}[htp]
\centering \scalebox{1.0}{\includegraphics{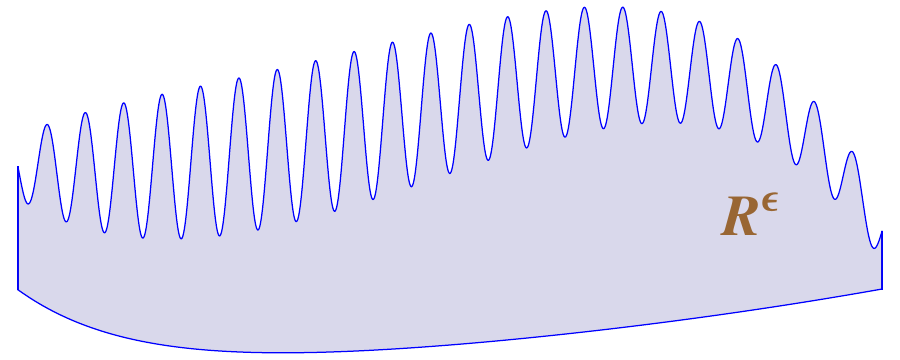}}
\label{thin-domain}
\caption{An oscillating thin domain $R^\epsilon$.}
\end{figure}

Notice that the existence and uniqueness of solutions for problem (\ref{P}) is guaranteed by Lax-Milgram Theorem. The aim of this work is to treat the convergence aspects of the solution $w^\epsilon$ as $\epsilon\to 0$, to an effective limit solution as the domain gets thinner and thinner although the oscillations becomes higher.

In \cite{ACPS,AP,AP2}, the authors combine methods from \emph{homogenization theory} and \emph{boundary perturbation problems} for partial differential equations to obtain the \emph{homogenized (effective) equation} associated to \eqref{P}. 
Since the thin domain $R^\epsilon$ degenerates to the interval $(0,1)$ as $\epsilon$ goes to zero, an one-dimensional limiting equation is established. The respective solution, $w_0 \in H^1(0,1)$, is the limit of the family $w^\epsilon \in H^1(R^\epsilon)$. 
We also have that the \emph{homogenized coefficient} of the limit equation depends on a family of auxiliary solutions, capturing somehow, the oscillatory geometry of the thin domain.
We refer the reader to \cite{BLP,CP, SP, Tt} for a general introduction to the theory of homogenization and to \cite{Henry} for a general treatise on boundary perturbation problems for partial differential equations. 

It is important to notice that the convergence of the sequence $w^\epsilon \in H^1(R^\epsilon)$ to the solution $w_0 \in H^1(0,1)$ of the homogenized equation established in \cite[Theorem 2.3]{AP2} is only weak with respect to $H^1$-norms. In fact, weak convergence in $H^1$-norms is the best one for these kind of problems exhibiting oscillatory behavior since in \cite{A, BLP, CD}, the authors show that in general strong convergence is not possible. 
For this reason, seeking for better convergence results, we apply an appropriate \emph{corrector} approach developed by Bensoussan, Lions and Papanicolaou in \cite{BLP}, to obtain a sort of strong convergence. The ideia is adjust the homogenized solution $w_0$, adding suitable \emph{corrector functions} $\kappa^\epsilon \in H^1(R^\epsilon)$, $\kappa^\epsilon=\text{o}(\epsilon)$ in $L^2(R^\epsilon)$, obtaining 
\begin{equation*} \label{SC}
\epsilon^{-1/2} \|w^\epsilon - w_0 - \kappa^\epsilon \|_{H^1(R^\epsilon)} \to 0, \quad \textrm{ as } \epsilon \to 0. 
\end{equation*}
%
%
Here we proceed as in \cite{HR, PR01, R} rescaling the Lebesgue measure of $H^1(R^\epsilon)$ by a factor $\epsilon^{-1}$ in order to preserve the \emph{relative capacity} of mensurable subset  $\mathcal{O} \subset R^{\epsilon}$ for small values of $\epsilon$. 
Our challenge here is to set a proper family of corrector functions in the way to get the desired strong convergence once the auxiliary solutions associated with the homogenized (limiting) equation are posed in an one-parameter family of \emph{representative cells}.
See Theorem \ref{theo:first-order} for a complete statement on these results.

In such problems, where the analytical solution demands a substantial effort or is unknown, it is of interest in discussing error estimates (in norm) when we replace it by numerical approximations. In this sense, the analysis performed here is fundamental in order understanding the convergence properties of the solutions of the problem \eqref{P}.

There are several works in the literature dealing with boundary value problems posed in thin domains presenting oscillating behavior. Among others, we mention \cite{MV, MP, PS} where asymptotic approximations of solutions  to elliptic problems in thin perforated domains defined by purely periodic functions with rapidly varying thickness were studied, as well as \cite{BGG, BGM}, dealing with nonlinear monotone problems in a multidomain with a highly oscillating boundary. 
Let us also cite \cite{AB, BZ} where the asymptotic description of nonlinearly elastic thin films with fast-oscillating profile was successfully obtained in a context of \emph{$\Gamma$-convergence}  \cite{Dal}.

We also can mention other works addressing the problem of the behavior of solutions of partial differential equations where just part of the boundary of the domain presents oscillation.   
This is the case in \cite{CFP} where the authors deal with the Poisson equation with Robin type boundary conditions in the oscillating part of the boundary.   
In general, the differential equation is not affected by the perturbation but the boundary condition. Some very interesting interplay among the geometry of the oscillations and  Robin boundary conditions can be analyzed and depending on this balance, the limiting boundary condition may be of a different type.  
Similar discussion can be found in \cite{CLS} for the Navier-Stokes equations with slip boundary conditions and in \cite{ArrBru10} for nonlinear elliptic equations with nonlinear boundary conditions as well the references given there.  
In the case of the present paper, the effect of the oscillations at the boundary is coupled with the effect of the domain being thin. As previously observed, the oscillatory behavior considered here affect completely the limit behavior of the equation. 

Let us also point out that thin structures with rough contours as thin rods,  plates or shells, as well as fluids filling out thin domains in lubrication procedures, or even chemical diffusion process in the presence of grainy narrow strips are very common in engineering and applied sciences. 
The analysis of the properties of these processes taking place on them, and understanding how the micro geometry of the thin structure affects the macro properties of the material is a very relevant issue in engineering and material design.  

Therefore,  being able to obtain the limiting equation of a prototype equation like the Poisson equation \eqref{P} in different structures where the micro geometry is not necessarily smooth and being able to analyze how the different micro scales affects the limiting problem, goes in this direction and will allow the research and deep understanding in more complicated situations.    
In this context we also mention \cite{Apl3, Apl1, Apl4} for some concrete applied problems. 

We also would like to observe that although we just treat the Poisson equation with Neumann boundary conditions, we also may consider other different conditions on the lateral boundaries of the thin domain while preserving the Neumann type boundary condition in the upper and lower boundary. Indeed, the limiting problem preserves the boundary condition.

The structure of this paper is as follows. In Section \ref{PRE}, we establish the functional setting for the perturbed equation \eqref{P} and we introduce its limiting problem, and in Section \ref{SFC} we obtain convergence in $H^1$-norms using the corrector approach. In the appendix we show smoothness of the auxiliary solutions with respect to the perturbation on the representative cell.



\section{Preliminaries} \label{PRE}

In the following, $\epsilon$ will denote a small positive parameter which will converges to $0$. In order to setup the thin domain \eqref{TD} we consider a function $b: [0,1] \to [0,\infty)$ and a family of positive functions $G_\epsilon:[0,1]\to (0,\infty)$, satisfying the following hypothesis

\begin{itemize}
\item[(H1)] 
The function $b$ belongs to ${C}^1(0,1)$, it is continuous in $[0,1]$ and satisfies $0 \leq b(x) \leq b_1$ for some positive constant $b_1$ and all $x \in [0,1]$;
\item[(H2)] There exist two positive constants $G_0$, $G_1$ such that 
\begin{equation} \label{HG}
\begin{gathered}
0< G_0 \le G_\epsilon(x) \le G_1, \quad  \forall \ x \in  [0,1], \quad \forall \ \epsilon \in (0,\epsilon_0).
\end{gathered}
\end{equation}
Furthermore, the  functions $G_\epsilon(\cdot)$ have the following structure $G_\epsilon(x)=G(x,x/\epsilon)$, where  the function
\begin{equation}\label{def-G}
\begin{array}{rcl}
G:[0,1] \times \R &\longrightarrow & (0,\infty) \\
 (x,y)& \mapsto & G(x,y)
 \end{array}
 \end{equation}
 is of class ${C}^1$ in $(0,1)\times \R$, continuous in $[0,1]\times \R$, $L$-periodic in the second variable $y$, i.e., $G(x,y+L)=G(x,y)$ for all $(x,y) \in [0,1]\times \R$. 

\end{itemize}

%
%
%

We stress for the fact that $R^\epsilon$ varies in accordance with the positive parameter $\epsilon$ and, when $\epsilon$ goes to $0$, the domains $R^\epsilon$ collapse themselves to the unit interval of the real line, therefore, in order to preserve the relative capacity of a mensurable subset  $\mathcal{O} \subset R^{\epsilon}$  we rescale the Lebesgue measure by a factor ${1}/{\epsilon}$, i.e., we deal with the singular measure 
$$
\rho_{\epsilon}(\mathcal{O})=\epsilon^{-1} | \mathcal{O} |,
$$ 
widely considered in thin domains problems (see for instance \cite{HR,PR01,R}). With this measure we introduce the Lebesgue and Sobolev spaces $L^2(R^\epsilon; \rho_{\epsilon})$ and $H^1(R^\epsilon;\rho_{\epsilon})$. The norms in these spaces will be denoted, respectivelly, by $||| \cdot |||_{L^2(R^\epsilon)}$ and $||| \cdot |||_{H^1(R^\epsilon)}$, which are induced by the inner products
\begin{equation*}
(u,v)_\epsilon = \epsilon^{-1} \int_{R^\epsilon} u \, v \, dx_1 dx_2, \quad \forall \, u,v \in L^2(R^\epsilon)
\end{equation*} 
and
\begin{equation*}
a_\epsilon(u,v) = \epsilon^{-1} \int_{R^\epsilon} \{ \nabla u \cdot \nabla v + u \, v \} dx_1 dx_2, \quad \forall \, u,v \in H^1(R^\epsilon).
\end{equation*} 

\begin{remark}
The $||| \cdot |||$ - norms and the usual ones in $L^2(R^\epsilon)$ and $H^1(R^\epsilon)$ are equivalents and easily related 
$$
\begin{gathered}
||| u |||_{L^2(R^\epsilon)} = \epsilon^{-1/2} \| u \|_{L^2(R^\epsilon)}, \quad \forall \, u \in L^2(R^\epsilon) \\
||| u |||_{H^1(R^\epsilon)} = \epsilon^{-1/2} \| u \|_{H^1(R^\epsilon)}, \quad \forall \,u \in H^1(R^\epsilon).
\end{gathered}
$$
\end{remark}

The variational formulation of (\ref{P}) is: find $w^\epsilon \in H^1(R^\epsilon)$ such that 
\begin{equation} \label{VFP}
\int_{R^\epsilon} \Big\{ \nabla w^\epsilon \cdot \nabla \varphi 
+ w^\epsilon \, \varphi \Big\} dx_1 dx_2 = \int_{R^\epsilon} f^\epsilon \, \varphi \, dx_1 dx_2, \quad \forall \, \varphi \in H^1(R^\epsilon),
\end{equation}
which is equivalent to find $w^\epsilon \in H^1(R^\epsilon;\rho^\epsilon)$ such that 
\begin{equation}\label{eq:vfor}
a_\epsilon(\varphi, w^\epsilon) = (\varphi, f^\epsilon)_\epsilon,  \quad \forall \, \varphi \in H^1(R^\epsilon;\rho_\epsilon). 
\end{equation}

We also observe that the solutions $w^\epsilon$ satisfy a priori estimates uniformly in $\epsilon$. In fact, taking $\varphi = w^\epsilon$ as a test function we derive from (\ref{VFP}) and (\ref{eq:vfor}) that
\begin{equation} \label{priori}
\begin{gathered}
\| \nabla w^\epsilon \|_{L^2(R^\epsilon)}^2
+ \| w^\epsilon \|_{L^2(R^\epsilon)}^2
\le \| f^\epsilon \|_{L^2(R^\epsilon)} \| w^\epsilon \|_{L^2(R^\epsilon)} \\
||| \nabla w^\epsilon |||_{L^2(R^\epsilon)}^2
+ ||| w^\epsilon |||_{L^2(R^\epsilon)}^2
\le ||| f^\epsilon |||_{L^2(R^\epsilon)} ||| w^\epsilon |||_{L^2(R^\epsilon)}.
\end{gathered}
\end{equation}

Now, in order to capture the limiting behavior of $a_\epsilon(w^\epsilon, w^\epsilon)$, as $\epsilon \to 0$, we consider the sesquilinear form $a_0$ in $H^1(0,1)$ defined by
\begin{equation} \label{IPA0}
a_0(u,v)  = \int_0^1  \left\{ r \, u_x \, v_x +  p \, u \, v \right\} dx, \quad \forall \, u,v \in H^1(0,1),
\end{equation}
where $r:(0,1) \to \R$ and $p:(0,1) \to \R$, defined in \eqref{HC}, are positive and bounded functions.
We also consider the following inner product $(\cdot, \cdot)_0$ in $L^2(0,1)$ given by
\begin{equation} \label{IP0}
(u,v)_0 = \int_{0}^1 p\, u  \, v\, dx, \quad \forall \, u,v \in L^2(0,1).
\end{equation}

\begin{remark}
There exists a relationship among the function $p$, the period $L$ of the function $G(x,\cdot)$ and the measure of the representative cell, $Y^*(x)$ $($defined in \eqref{CELLL}$)$, by the expression
\begin{equation} \label{LpY}
L \, p(x) = |Y^*(x)|, \quad \forall x \in (0,1).
\end{equation}
\end{remark}

With such considerations, one can show that the \emph{homogenized problem} associated to \eqref{P} is the Neumann problem
\begin{equation} \label{HL}
\int_0^1 \Big\{ r \, w_x \, \varphi_x
+ p \, w \, \varphi \Big\} dx = \int_0^1  \, \hat f \, \varphi \, dx, \quad \forall \varphi \in H^1(0,1),
\end{equation}
where $r$, $p:(0,1) \to \R^+$ are positive functions defined by
\begin{equation} \label{HC}
\begin{gathered}
r(x) =  \frac{1}{L}\int_{Y^*(x)}\Big\{ 1 - \partial_y X(x)(y,z) \Big\} dy dz, \\ 
p(x) = \frac{|Y^*(x)|}{L}, 
\end{gathered}
\end{equation} 
$X(x)$ is the unique solution of the auxiliary problem 
\begin{equation} \label{AUXP}
\left\{
\begin{gathered}
- \Delta X(x)  =  0  \textrm{ in } Y^*(x)  \\
\frac{\partial X(x)}{\partial N}(y,g(y))= N_1(x) \quad  \textrm{ on } B_1(x)\\
\frac{\partial X(x)}{\partial N}(y,z)=0 \quad \textrm{ on } B_2(x)\\
 X(x)(\cdot, z) \quad L\text{ - periodic} \\ 
\int_{Y^*(x)} X(x) \; dy dz  =  0  
\end{gathered}
\right.
\end{equation}
in the representative cell $Y^*(x)$ given by
\begin{equation} \label{CELLL}
Y^*(x) = \{ (y,z) \in \R^2 \; : \; 0< y < L, \   - b(x) < z < G(x,y) \},
\end{equation}
where $B_1(x)$ is the upper boundary, $B_2(x)$ is the lower boundary of $\partial Y^*(x)$ for $x \in (0,1)$ and $N(x)=(N_1(x),N_2(x))$ is the unitary outside vector field to $\partial Y^*(x)$ .
The function $\hat f \in L^2(0,1)$ is such that the sequence defined by 
$$
\hat f^\epsilon(\xi) = \epsilon^{-1} \int_{-\epsilon b(\xi)}^{\epsilon G_\epsilon(\xi)} f^\epsilon (\xi,s) \, ds, \quad \xi \in (0,1),
$$ 
satisfies $\hat f^\epsilon \stackrel{\epsilon \to 0}{\rightharpoonup} \hat f$, $w-L^2(0,1)$.
Indeed, it follows from \cite[Theorem 2.3]{AP2} that if $w_0 \in H^1(0,1)$ is the unique solution of \eqref{HL}, then the solutions $w^\epsilon$ of \eqref{P} satisfies 
\begin{equation} \label{obs1}
||| w^\epsilon - w_0 |||_{L^2(R^\epsilon)} \to 0, \quad \textrm{ as } \epsilon \to 0.
\end{equation}

In the present paper we introduce a suitable corrector function $\kappa^\epsilon \in H^1(R^\epsilon)$, with $\kappa^\epsilon=\text{o}(\epsilon)$ in $L^2(R^\epsilon)$ 
insomuch  
\begin{equation*} \label{SC}
|||w^\epsilon - w_0 - \kappa^\epsilon|||_{H^1(R^\epsilon)} \to 0, \quad \textrm{ as } \epsilon \to 0. 
\end{equation*}

\begin{remark}
i) If the function $ r(x)$ is continuous, the integral formulation \eqref{HL}
is the weak form of the homogenized equation
\begin{equation} \label{HEq}
\left\{
\begin{gathered}
-\frac{1}{p(x)}(r(x)w_{x})_x + w =  f(x), \quad x \in (0,1)\\
w_x(0)=w_x(1)=0
\end{gathered}
\right.
\end{equation} 
with $f(x)=\hat f(x)/p(x)$.
\par\noindent ii) If initially the function $f^\epsilon(x,y)=f_0(x)$, then it is not difficult to see that $\hat f^\epsilon(x)=(G_\epsilon(x)+b(x))f_0(x)$ and $ G_\epsilon(x) + b(x) \rightharpoonup \frac{|Y^*(x)|}{L} = p(x)$, $w-L^2(0,1)$, as $\epsilon\to 0$,  and therefore, $\hat f(x)=p(x)f_0(x)$. 
\end{remark}


\subsection{Smoothness of $X$} \label{XTHETA}

The auxiliary functions $X(x)$ introduced by the problem \eqref{AUXP} are originally defined in the representative cell $Y^*(x)$, that depends on $x \in (0,1)$. Such variable performs a perturbation on $Y^*(x)$ changing its height at the direction $z$ which also has influence on the auxiliary solution $X(x)$.  Since such a \emph{domain perturbation} is smooth so is $X(x) \in H^1(Y^*)$ with respect to $x$. 
In fact we are able to compute at least one derivative with respect to $x\in (0,1)$ by using Corollary \ref{PAPEN2} proved in Appendix \ref{appendix}. There we combine methods from \cite[Proposition A.2]{AP2} and \cite[Chapter 2]{Henry} where more general results can be found.
This is the reason of our smoothness assumptions to the functions $b$ and $G$ in $(H1)$ and $(H2)$.

Thus, in order to consider $X(x)$ as a function in the thin domain $R^\epsilon$, with the purpose to define the corrector term $\kappa$, we first use the periodicity at variable $y$ to extend them to the band
$$
Y(x) = \{ (y,z) \in \R^2 \, : \, y \in \R, \, -b(x) < z < G(x,y) \}, \quad \textrm{ for all } x \in (0,1).
$$
Then, we properly compose them with the diffeomorphisms
\begin{equation} \label{DIFFEO}
T^\epsilon: \R^2 \to \R^2 \, : \, (x_1,x_2) \mapsto (x_1/\epsilon, x_2/\epsilon), \quad \textrm{ for } \epsilon \in (0, \epsilon_0),
\end{equation}
defining, at the end, the function 
\begin{equation}\label{eq:diff-chang}
X(x) \circ T^\epsilon: R^\epsilon \to \R \text{ for all } x \in (0,1).
\end{equation}

Finally, with some notational abuse, we consider the function
\begin{equation} \label{SL0}
 X: (0,1) \times R^\epsilon \mapsto \R, \quad X(y,y,z) = X(y)\circ T^\epsilon \left(y/\epsilon,z/\epsilon\right) 
\end{equation}
correlating variables $x$ and $y$ of the original function $X$ given by \eqref{AUXP}. 
In order to simplify our notation at the analysis below we still denote functions \eqref{SL0} by 
\begin{equation} \label{XonR}
X = X(x_1)\left(x_1/\epsilon,x_2/\epsilon\right) \quad \textrm{ whenever } (x_1,x_2) \in R^\epsilon.
\end{equation}

Now, under these considerations, we can obtain some estimates for $X$ on $R^\epsilon$. Indeed, 
\begin{equation*}
\begin{split}
\| X \|_{L^2(R^\epsilon)}^2  & =  \int_{R^\epsilon} \left| X(x_1)({x_1}/\epsilon,{x_2}/\epsilon)  \right|^2 dx_1 dx_2 \\
& = \epsilon^2 \int_0^{1/\epsilon} \int_{-b(\epsilon y_1)}^{G(\epsilon y_1,y_2)} |X(\epsilon y_1) (y_1,y_2) |^2 \, dy_1 dy_2  \\
& \leq  C \sum_{k=1}^{1/\epsilon L} \epsilon^2 \int_{Y^*(0)} \left| X(0)(y,z)  \right|^2 dy dz  \\
& \leq {\epsilon} \, \hat C
\end{split}
\end{equation*}
for all $\epsilon \in (0,\hat \epsilon)$, for $\hat \epsilon >0$ small enough with $\hat C>0$ (independent of $\epsilon$). This is provided  by the continuity of the application 
$$
[0,1] \ni s \mapsto X(s) \circ T^\epsilon \in H^1(R^\epsilon)
$$ 
at $s=0$, for each $\epsilon > 0$. 
Thus, using the notation introduced at \eqref{XonR}, we have that 
\begin{equation} \label{eq:Xstim}
 \| X \|_{L^2(R^\epsilon)} = O(\sqrt{\epsilon}). 
\end{equation}

\begin{remark}
Analogously, we can introduce similar notations for the derivatives of $X$ with respect to variables $x$, $y$ and $z$, obtaining that the norms $\| \partial_x X \|_{L^2(R^\epsilon)}$, $\| \partial_y X \|_{L^2(R^\epsilon)}$, and $\| \partial_z X \|_{L^2(R^\epsilon)}$  satisfy \eqref{eq:Xstim} for all $\epsilon \in (0,\hat \epsilon)$.
\end{remark}

\section{First-order Corrector}\label{SFC}

As we mentioned before the solutions $w^\epsilon$ of \eqref{P} doesn't converge in general in $H^1$-norms. Therefore, following Bensoussan, Lions and Papanicolaou in \cite{BLP}, we use the auxiliary solution $X$ to introduce the first-order corrector term
\begin{equation}\label{FOC1}
\kappa^\epsilon(x_1,x_2) = - \epsilon X(x_1)\left(\dfrac{x_1}{\epsilon},\dfrac{x_2}{\epsilon}\right) \dfrac{dw_0}{dx_1}(x_1),
\quad (x_1,x_2) \in R^\epsilon.
\end{equation}

\begin{remark} Let us still observe that the function $\kappa^\epsilon$ defined in \eqref{FOC1} is not the standard corrector according to \cite{BLP, CD, PS} since its dependence with respect to variable $x_1$ involves the changeable auxiliary solution $X$ with respect to the representative cell  $Y^*(x)$ whose smoothness has been discussed in Section $\ref{XTHETA}$.
As a matter of fact, we believe that this is the main contribution of our work established here. 
\end{remark}

Now, we have elements to proof the following result

\begin{theorem}\label{theo:first-order}
Let $w^\epsilon$ be the solution of problem $\eqref{P}$ with $f^\epsilon \in L^2(R^\epsilon)$ satisfying
$$
||| f^\epsilon |||_{L^2(R^\epsilon)} \leq C
$$
for some $C > 0$ independent of $\epsilon$.
Consider the family of functions $\hat f^\epsilon \in L^2(0,1)$ defined by
\begin{equation}\label{def-hat-f}
\hat f^\epsilon (x_1) = \epsilon^{-1} \int_{-\epsilon b(x_1)}^{\epsilon G_\epsilon(x_1)} f^\epsilon(x_1,x_2) \, dx_2 
\end{equation}
with $G_\epsilon(\cdot)$ and $b(\cdot)$ satisfying hypotheses $(H1)$ and $(H2)$.

If $\hat f^\epsilon \rightharpoonup \hat f $\,  w-$L^2(0,1)$, then
\begin{equation}\label{eq:conv-f-corr}
\lim_{\epsilon \to 0} ||| w^\epsilon  - w_0 - \kappa^\epsilon |||_{H^1(R^\epsilon)} = 0,
\end{equation}
where $\kappa^\epsilon$ is the first-order corrector of $w^\epsilon$ defined in \eqref{FOC1}, and $w_0 \in H^2(0,1) \cap {C}^1(0,1)$ is the  unique solution of the homogenized equation $\eqref{HEq} $
with
\begin{equation} \label{F0}
f(x) = \hat f(x)/p(x).
\end{equation}
\end{theorem}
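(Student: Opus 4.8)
The plan is to follow the classical Bensoussan--Lions--Papanicolaou energy argument, adapted to the $x_1$-dependent cell family. First I would set $z^\epsilon = w^\epsilon - w_0 - \kappa^\epsilon$ and, since the bilinear form $a_\epsilon$ is coercive and the $|||\cdot|||_{H^1(R^\epsilon)}$-norm is equivalent to $(a_\epsilon(\cdot,\cdot))^{1/2}$, it suffices to show $a_\epsilon(z^\epsilon,z^\epsilon) \to 0$. Using the variational formulation \eqref{eq:vfor} for $w^\epsilon$, this reduces to proving that
\begin{equation*}
a_\epsilon(z^\epsilon, \varphi^\epsilon) = (\varphi^\epsilon, f^\epsilon)_\epsilon - a_\epsilon(w_0 + \kappa^\epsilon, \varphi^\epsilon) \longrightarrow 0
\end{equation*}
when tested against $\varphi^\epsilon = z^\epsilon$ itself, after first establishing that $|||z^\epsilon|||_{H^1(R^\epsilon)}$ stays bounded (which follows from the a priori estimate \eqref{priori} for $w^\epsilon$, from \eqref{obs1}, and from $\|\kappa^\epsilon\|_{H^1(R^\epsilon)} = O(\sqrt\epsilon)$ thanks to \eqref{eq:Xstim} and its analogue for the derivatives of $X$, together with $w_0 \in H^2(0,1)$).

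The core computation is the expansion of $a_\epsilon(w_0 + \kappa^\epsilon, \varphi)$. Here I would change variables via $T^\epsilon$ to pull everything back to the stretched band, write $\nabla \kappa^\epsilon$ explicitly using \eqref{FOC1} — the dominant term being $-(\partial_y X, \partial_z X) w_0'(x_1)$ from differentiating the fast variables, with the $\epsilon$-order remainder coming from $\partial_x X$, $X$ and $w_0''$ — and then invoke the cell problem \eqref{AUXP}: the equation $-\Delta X(x) = 0$ and the Neumann data on $B_1(x), B_2(x)$ are exactly what make the oscillatory first-order terms integrate, after an integration by parts on each period cell, to the homogenized coefficient $r(x)$ in \eqref{HC}. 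The zeroth-order term $w_0\varphi$ averages, via the $L$-periodicity and \eqref{LpY}, to $p(x) w_0 \varphi$, and the right-hand side $(\varphi,f^\epsilon)_\epsilon$ collapses to $\int_0^1 \hat f^\epsilon \bar\varphi\, dx_1$ where $\hat f^\epsilon \rightharpoonup \hat f = p f$. Matching these against the homogenized weak form \eqref{HEq}/\eqref{HL} shows the leading terms cancel, leaving a quantity that tends to $0$.

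To make the weak convergences usable one must be careful about the test function: $\varphi = z^\epsilon$ is $\epsilon$-dependent, so one cannot simply pass to a limit in a product of two weakly convergent sequences. The standard device is to work with the identity $a_\epsilon(z^\epsilon, z^\epsilon) = a_\epsilon(z^\epsilon, w^\epsilon - w_0 - \kappa^\epsilon)$ and expand $a_\epsilon(z^\epsilon, w^\epsilon) = (z^\epsilon, f^\epsilon)_\epsilon$, then control $a_\epsilon(z^\epsilon, w_0)$ and $a_\epsilon(z^\epsilon, \kappa^\epsilon)$ by a further integration by parts that transfers derivatives off $z^\epsilon$; the terms that remain pair a (strongly convergent, in $L^2(R^\epsilon;\rho_\epsilon)$, by \eqref{obs1}) sequence against a bounded one, or involve $O(\sqrt\epsilon)$ factors. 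Along the way I would use the smoothness of $x \mapsto X(x)$ with respect to the cell perturbation — Corollary \ref{PAPEN2} and the estimates in Section \ref{XTHETA} — to handle the $\partial_x X$ contributions and to justify that replacing $X(x_1)$ by $X$ frozen at nearby points produces only lower-order errors.

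I expect the main obstacle to be precisely this $x_1$-dependence of the cell $Y^*(x_1)$ and of $X(x_1)$: in the purely periodic case the corrector is a function of $x/\epsilon$ alone and the periodic integration by parts is immediate, whereas here differentiating $\kappa^\epsilon$ in $x_1$ produces the extra term $-\epsilon(\partial_x X)(x_1)(x_1/\epsilon,x_2/\epsilon)w_0'(x_1)$, and the ``period cells'' over which one integrates are themselves varying with $x_1$. Controlling these requires the $C^1$ regularity of $G$ in $(H2)$, the differentiability result of the appendix, and a Riemann-sum type argument (as foreshadowed by the estimate chain leading to \eqref{eq:Xstim}) to show that summing the cell-by-cell contributions converges to the integral $\int_0^1 r(x) w_0'(x)\varphi'(x)\,dx$ with an error that vanishes with $\epsilon$.
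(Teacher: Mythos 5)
Your overall architecture -- the energy expansion of $a_\epsilon(z^\epsilon,z^\epsilon)$ for $z^\epsilon=w^\epsilon-w_0-\kappa^\epsilon$, the cell problem \eqref{AUXP} producing $r(x)$ through the identity $\int_{Y^*(x)}|\nabla_{y,z}X(x)|^2\,dydz=\int_{Y^*(x)}\partial_yX(x)\,dydz$, the averaging of $L$-periodic quantities giving $p(x)$, the weak convergence of $\hat f^\epsilon$, and the homogenized equation to close the argument -- coincides with the paper's. The gap is in your treatment of the cross terms. You propose to expand $a_\epsilon(z^\epsilon,w^\epsilon)=(z^\epsilon,f^\epsilon)_\epsilon$ and then to control $a_\epsilon(z^\epsilon,w_0)$ and $a_\epsilon(z^\epsilon,\kappa^\epsilon)$ by ``a further integration by parts that transfers derivatives off $z^\epsilon$''. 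As stated this step does not go through: integrating by parts in $R^\epsilon$ produces boundary integrals on the oscillating part of $\partial R^\epsilon$ involving the trace of $z^\epsilon$, which you would have to control by $\epsilon$-uniform trace estimates together with the only approximate cancellation of the conormal derivative of $w_0+\kappa^\epsilon$ there; moreover, moving a derivative onto $\kappa^\epsilon$ requires second derivatives of $X(x_1)(x_1/\epsilon,x_2/\epsilon)$, including mixed derivatives in the slow and fast variables, while $X(x)$ is only an $H^1(Y^*(x))$ weak solution of \eqref{AUXP}, so these derivatives are available only by using the cell equation in a distributional sense. None of this is carried out in your sketch, and it is precisely where the $x_1$-dependence of the cell $Y^*(x_1)$ makes the argument delicate.

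The paper avoids all of this with one device you never invoke: $w_0+\kappa^\epsilon\in H^1(R^\epsilon)$ is itself an admissible test function in \eqref{eq:vfor}. Using the weak formulation twice, with test functions $z^\epsilon$ and $w_0+\kappa^\epsilon$, every occurrence of $\nabla w^\epsilon$ in the bilinear form disappears and
\[
|||z^\epsilon|||^2_{H^1(R^\epsilon)}=(w^\epsilon-2(w_0+\kappa^\epsilon),f^\epsilon)_\epsilon+a_\epsilon(w_0+\kappa^\epsilon,w_0+\kappa^\epsilon),
\]
so that $w^\epsilon$ only appears paired against $f^\epsilon$, where the strong $L^2$ convergence \eqref{obs1} and the estimate \eqref{eq:Xstim} suffice, while the remaining term is an explicit quadratic expression in $X$, $\partial_xX$, $\partial_yX$, $\partial_zX$ and $w_0$, whose limit is computed by the average convergence theorem for $L$-periodic functions (with the slow variable frozen) combined with \eqref{eqSL1}; no integration by parts on $R^\epsilon$, no boundary terms, and no second derivatives of $X$ are needed. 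If you replace your cross-term step by this device, the rest of your outline (a priori bounds, the $O(\sqrt\epsilon)$ estimates for $X$ and its first derivatives, and the smoothness of $x\mapsto X(x)$ from the appendix) matches the actual proof.
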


\begin{proof}
We have $a_\epsilon(\varphi, w^\epsilon) = (\varphi,f^\epsilon)_\epsilon$,  for all $\varphi \in H^1(R^\epsilon)$.
Taking $w_0 + \kappa^\epsilon \in H^1(R^\epsilon)$\footnote{Note that here $w_0$ is considered to be a function of both variables $x_1$ and $x_2$, simply by $w_0(x_1,x_2)=w_0(x_1)$.} as a test function, we obtain that
\begin{equation}\label{eq:nor-firs-corr}
\begin{split}
|||w^\epsilon - w_0 - \kappa^\epsilon|||^2_{H^1(R^\epsilon)} & =
a_\epsilon(w^\epsilon - w_0 - \kappa^\epsilon,w^\epsilon - w_0 - \kappa^\epsilon) \\
& =  a_\epsilon(w^\epsilon,w^\epsilon - w_0 - \kappa^\epsilon) - a_\epsilon(w_0 + \kappa^\epsilon,w^\epsilon) \\
& + a_\epsilon(w_0 + \kappa^\epsilon,w_0 + \kappa^\epsilon)\\
& = (w^\epsilon-2(w_0 + \kappa^\epsilon) , f^\epsilon)_\epsilon + a_\epsilon(w_0 + \kappa^\epsilon,w_0 + \kappa^\epsilon) .
\end{split}
\end{equation}
Now, performing the change of variables $(x,y) \to (x,y/\epsilon)$ on problem \eqref{P}, it follows from \cite[Theorem 2.3]{AP2}, as observed in \eqref{obs1} that
\begin{equation*}
\epsilon^{-1} \| w^\epsilon - w_0 \|_{L^2(R^\epsilon)} \to 0 \textrm{ as } \epsilon \to 0,
\end{equation*}
i.e., $||| w^\epsilon - w_0 |||_{L^2(R^\epsilon)} \stackrel{\epsilon \to 0} \longrightarrow 0$. Hence
\begin{equation} \label{*}
(w^\epsilon-w_0,f^\epsilon)_\epsilon \le  |||w^\epsilon-w_0|||_{L^2(R^\epsilon)} |||f^\epsilon |||_{L^2(R^\epsilon)}  \to 0, \quad \text{as } \epsilon \to 0.
\end{equation}
From estimate \eqref{eq:Xstim}, we also obtain (uniformly for $x \in (0,1)$) that 
\begin{eqnarray} \label{**}
(\kappa^\epsilon,f^\epsilon)_\epsilon
& \le & \epsilon^{-1} \|\kappa^\epsilon \|_{L^2(R^\epsilon)} \|f^\epsilon \|_{L^2(R^\epsilon)} \nonumber \\
& \le & \| X(x) \|_{L^2(R^\epsilon)} \|f^\epsilon \|_{L^2(R^\epsilon)} \left\| dw_0/dx \right\|_{L^\infty(0,1)} \\
& \le & \sqrt{\epsilon} \, \hat C \to 0, \quad \textrm{ as } \epsilon \to 0. \nonumber
\end{eqnarray}
Moreover, from the convergence $\hat f^\epsilon \rightharpoonup \hat f $\,  w-$L^2(0,1)$, we obtain that
\begin{eqnarray} \label{***}
(w_0,f^\epsilon)_\epsilon & = & \epsilon^{-1} \int_0^1 w_0 (x_1) \int_{-\epsilon b(x_1)}^{\epsilon G(x_1,x_1/\epsilon) } f^\epsilon (x_1,x_2) \, dx_2dx_1 \nonumber \\
& = & \int_0^1 w_0 (x) \hat f^\epsilon(x)\, dx \nonumber \\
& \to & \int_0^1 w_0 (x) \hat f (x) \, dx, \quad \text{ as } \epsilon \to 0 .
\end{eqnarray}
Therefore, we get from \eqref{F0}, \eqref{*}, \eqref{**}, \eqref{***} and \eqref{IP0} that
\begin{equation} \label{eq:TCH}
(w^\epsilon-2(w_0 + \kappa^\epsilon) , f^\epsilon)_\epsilon \stackrel{\epsilon \to 0} \longrightarrow ( w_0, f )_0.
\end{equation}

Next, we will show that
\begin{equation} \label{eqeq}
a_\epsilon(w_0 + \kappa^\epsilon,w_0 + \kappa^\epsilon) \to  a_0(w_0 ,w_0 ), \quad \textrm{ as } \epsilon \to 0.
\end{equation}
In order to do that, we need to pass to the limit on the expression 
\begin{eqnarray} \label{eq:ARGUE}
& & a_\epsilon(w_0 + \kappa^\epsilon,w_0 + \kappa^\epsilon)  = \epsilon^{-1} \int_{R^\epsilon}\big\{ |\nabla(w_0 + \kappa^\epsilon)|^2 + |w_0 + \kappa^\epsilon|^2 \big\} dx_1 dx_2 \nonumber \\
 &  & \qquad =\epsilon^{-1} \int_{R^\epsilon} \left\{ \dfrac{dw_0}{dx_1}^2 \left( \left( 1- \partial_{y} X \right)^2 + {\partial_{z} X}^2 \right) + w_0^2 \right\}  dx_1 dx_2 \label{aI1} \\
& & \qquad - 2 \int_{R^\epsilon} \left\{ \left(1 - \partial_y X \right) \left( \dfrac{dw_0}{dx_1}^2 \, \partial_x X + \dfrac{dw_0}{dx_1} \, \dfrac{d^2w_0}{{dx_1}^2} \, X \right) + w_0 \, \dfrac{dw_0}{dx_1} \, X \right \} dx_1 dx_2  \label{aI2} \\
& & \qquad + \epsilon \int_{R^\epsilon} \left\{ \dfrac{dw_0}{dx_1}^2 \, {\partial_x X}^2 + 2 \, \dfrac{dw_0}{dx_1} \, \dfrac{d^2w_0}{{dx_1}^2} \, X \, \partial_x X + X^2 \left( \dfrac{d^2w_0}{{dx_1}^2}^2 + \dfrac{dw_0}{dx_1}^2 \right)  \right\} dx_1 dx_2 \label{aI3}
\end{eqnarray}
when $\epsilon$ goes to zero.
By estimates in \eqref{eq:Xstim}, we shall have to consider the parcel
\begin{eqnarray}
 & & \epsilon^{-1} \int_{R^\epsilon} \left\{ \dfrac{dw_0}{dx_1}^2 \left( \left( 1 - {\partial_y X}(x_1) \left(\frac{x_1}{\epsilon},\frac{x_2}{\epsilon} \right) \right)^2 + {\partial_z X(x_1) \left(\frac{x_1}{\epsilon},\frac{x_2}{\epsilon} \right) }^2 \right) + {w_0(x_1)}^2 \right\} \, dx_1 dx_2 \nonumber \\
& & =  \int_0^1 \int_{-b(x)}^{G(x,x/\epsilon)} \left\{ \dfrac{dw_0}{dx}^2 \left( \left( 1 - {\partial_y X}(x) \left(\frac{x}{\epsilon},z \right) \right)^2 + {\partial_z X(x)\left(\frac{x}{\epsilon},z \right) }^2 \right) + {w_0(x)}^2 \right\} dz  dx \nonumber \\
& & =  \int_0^1 \dfrac{dw_0}{dx}^2  \int_{-b(x)}^{G(x,x/\epsilon)} \left\{ \left( 1 - \,  {\partial_y X}(x) \left(\frac{x}{\epsilon},z \right) \right)^2 + {\partial_z X(x)\left(\frac{x}{\epsilon},z \right) }^2 \right\} dz dx\label{aI11} \\
& & \qquad \quad 
+ \int_0^1  {w_0(x)}^2 \left( G(x,x/\epsilon) + b(x) \right) \, dx.  \label{aI12}
\end{eqnarray}
Since the function $G(x,y)$ is $L$-periodic at variable $y$, we also have that
$$
\Phi(y) = \int_{-b(x)}^{G(x,y)} \left\{ \left( 1 - \,  {\partial_y X}(x) \left(y,z \right) \right)^2 + {\partial_z X(x)\left(y,z \right) }^2 \right\} dz
$$
is $L$-periodic for each $x \in (0,1)$ fixed. Consequently, we obtain from \cite[Theorem 2.6]{CD}\footnote{This theorem is often called \emph{Average Convergence Theorem for Periodic Functions}. See also \cite{CP}.} that 
$$
\Phi(x/\epsilon) \rightharpoonup 1/L \int_0^L \Phi(s) \, ds, \ w^*-L^\infty(0,1),
$$ 
as $\epsilon \to 0$. Therefore
\begin{eqnarray} \label{eqConv1}
& & \epsilon^{-1} \int_{R^\epsilon}  \dfrac{dw_0}{dx_1}^2 \left\{ \left( 1 - {\partial_y X}(x_1) \left(\frac{x_1}{\epsilon},\frac{x_2}{\epsilon} \right) \right)^2 + {\partial_z X(x_1) \left(\frac{x_1}{\epsilon},\frac{x_2}{\epsilon} \right) }^2 \right\} \, dx_1 dx_2 \nonumber \\
& & \qquad \to \int_0^1 \dfrac{dw_0}{dx}^2 \, \frac{1}{L} \int_{Y^*(x)} \left\{ \left( 1 - {\partial_y X}(x)\left( y, z \right) \right)^2 + {\partial_z X(x) \left( y, z \right) }^2 \right\} \, dy dz dx
\end{eqnarray}
as $\epsilon \to 0$. See also \cite[Lemma 4.2]{AP3}.

Now, since $X(x)$ satisfies \eqref{AUXP}, we have that
$$
\int_{Y^*(x)} | \nabla_{y,z} X(x)|^2 dydz = \int_{B_1} N_1 \, X(x) \, dS
$$ 
where $N=(N_1,N_2)$ is the unit outward normal to $B_1$, the upper boundary of $\partial Y^*(x)$. Hence, we obtain from 
$$
\int_{B_1} N_1 \, X(x) \, dS 
= \int_{Y^*(x)} {\rm{ div }_{y,z}} 
\left(
\begin{array}{c}
X(x) \\ 0
\end{array} 
\right)
\, dydz 
= \int_{Y^*(x)} \partial_y X(x) \, dydz
$$ 
that, for all $x \in (0,1)$, 
\begin{equation} \label{eqSL1}
\int_{Y^*(x)} | \nabla_{y,z} X(x)|^2 dydz =  \int_{Y^*(x)} \partial_y X(x) \, dydz.
\end{equation}
Thus, due to \eqref{eqSL1}, we have  
\begin{eqnarray}
& & \int_{Y^*(x)} \left\{ \left( 1 - {\partial_y X}(x) \left( y, z \right) \right)^2 + {\partial_z X(x) \left( y, z \right) }^2 \right\} \, dy dz \nonumber \\
& & = \int_{Y^*(x)} \left\{ 1 - 2 \, {\partial_y X}(x) \left( y, z \right) + | \nabla_{y,z} X(x)|^2 \right\} \, dy dz  \nonumber \\
& & = \int_{Y^*(x)} \left\{ 1 - {\partial_y X}(x) \left( y, z \right) \right\} \, dy dz \nonumber \\
& & = r(x) \, L \label{eqSL2},
\end{eqnarray}
where the positive function $r(x)$, called \emph{homogenization coefficient}, is introduced at \eqref{HC}.

Then, putting \eqref{eqConv1} and \eqref{eqSL2} together, we get the following limit
\begin{eqnarray} \label{eq*}
& & \epsilon^{-1} \int_{R^\epsilon}  \dfrac{dw_0}{dx_1}^2 \left\{ \left( 1 - {\partial_y X}(x_1) \left(\frac{x_1}{\epsilon},\frac{x_2}{\epsilon} \right) \right)^2 +   {\partial_z X(x_1) \left(\frac{x_1}{\epsilon},\frac{x_2}{\epsilon} \right) }^2  \right\} \, dx_1 dx_2 \nonumber \\
& & \qquad \to \int_0^1 r(x) \, \dfrac{dw_0}{dx}^2 \, dx, \quad \textrm{ as } \epsilon \to 0.
\end{eqnarray}
Back to the parcel \eqref{aI12}, we notice that as in \cite[Theorem 2.6]{CD} and \cite[Lemma 4.2]{AP3} we have that 
\begin{equation} \label{eq**}
\int_0^1  {w_0(x)}^2 \left( G(x,x/\epsilon) + b(x) \right) \, dx   \to \int_0^1 p(x) \, {w_0(x)}^2 \, dx, \quad \textrm{ as } \epsilon \to 0,
\end{equation}
where the function $p$ is defined at \eqref{HC}, and it is related to the measure of the representative cell $Y^*(x)$.

Therefore, the limit \eqref{eqeq} follows from \eqref{eq*} and \eqref{eq**}.

Finally, according to \eqref{eq:nor-firs-corr}, we complete the proof getting
$$
 |||w^\epsilon - w_0 - \kappa^\epsilon |||^2_{H^1(R^\epsilon)}  \stackrel{\epsilon \to 0}{\longrightarrow} a_0(w_0,w_0) - (w_0 , f)_0 = 0,
$$
from \eqref{eqeq}, \eqref{eq:TCH}, and using that $w_0$ satisfies \eqref{HEq}.

\end{proof}

\appendix
\section{ A perturbation result in the basic cell.}
\label{appendix}

In the proof of the main results in Section \ref{SFC} we have used the smoothness of the auxiliary solution $X$ given by \eqref{AUXP} with respect to variable $x \in (0,1)$ evaluating the function $\partial_x X$.  In order to get such a derivative we analyze here, without loss of generality, how the function $X \in H^1(Y^*(x))$, solution of
\begin{equation} \label{AUXKato1}
\left\{
\begin{gathered}
- \Delta X(x)  =  0  \textrm{ in } Y^*(x)  \\
\frac{\partial X(x)}{\partial N}(y,g(y))= N_1(x) \quad  \textrm{ on } B_1(x)\\
\frac{\partial X(x)}{\partial N}(y,z)=0 \quad \textrm{ on } B_2(x)\\
 X(x)(\cdot, z) \quad L\text{ - periodic} \\ 
\int_{Y^*(x)} X(x) \; dy dz  =  0  
\end{gathered}
\right.
\end{equation}
in the representative cell $Y^*(x)$ defined by
\begin{equation} \label{BCEL}
Y^*(x) = \{ (y,z) \in \R^2 \ : \ 0< y < L,  0 < z < G(x,y) \},
\end{equation}
depends on the function $G$ where $B_1(x)$ and $B_2(x)$ are respectively the upper and lower boundary of $\partial Y^*(x)$ for $x \in (0,1)$, and $N(x)=(N_1(x),N_2(x))$ is the unitary outside vector field to $\partial Y^*(x)$. 

For this we consider the following class of admissible functions
\begin{equation}\label{admissible}
A(M)=\{G\in C^1(\R), L-\hbox{periodic},\, 0<G_0\leq G(\cdot)\leq G_1, |G'(s)|\leq M\}
\end{equation}
endowed with the norm 
$
\| G \| = \max_{0\leq i \leq 1} \sup_{\xi \in \R} |\partial_\xi^i G(\xi)|.
$ 
We call by $Y^*(G)$ the basic cell (\ref{BCEL}) and by $X(G)$ the unique solution of problem (\ref{AUXKato1}) for $G$ varying in $A(M)$.

Following the approach applied in \cite{A, HR, Henry} we begin by making the transformation on the cell $Y^*(G)$ into the domain $Y^*(\hat G)$ for each $\hat G \in A(M)$
\begin{eqnarray*}
L_{\hat G} & : & Y^*(G) \mapsto Y^*(\hat G) \\
& & (z_1, z_2) \to (z_1, \hat F(z_1) \; z_2) = (y_1, y_2)
\end{eqnarray*}
where $\hat F(z) = \frac{\hat G(z)}{G(z)}$.
The Jacobian matrix for this transformation is
$$
J L_{\hat G} (z_1,z_2) = \left(
\begin{array}{cc}
1 & 0 \\
\hat F'(z_1) z_2 & \hat F(z_1)
\end{array}
\right)
$$
whose determinant is given by 
\begin{equation} \label{RF0}
|J L_{\hat G} (z_1,z_2)|=\hat F(z_1)=\frac{\hat G(z_1)}{G(z_1)}
\end{equation}
that is a rational function with respect to $G$ and $\hat G$ satisfying 
\begin{equation} \label{RF00}
0< \frac{G_0}{G_1} \leq \hat F(z) \leq \frac{G_1}{G_0}, \quad \textrm{ for all } z \in \R.
\end{equation} 

Consequently, using $L_{\hat G}$ we have that problem (\ref{AUXKato1}) in the cell $Y^*(\hat G)$ is equivalent to
\begin{equation} \label{AUXKato00}
\left\{
\begin{array}{ll}
- \frac{1}{\hat F} {\rm div }( B_{\hat G} W ) = 0 & \textrm{ in } Y^*(G) \\
B_{\hat G} W \cdot N = 0 & \textrm{ on } B_2(G) \\
B_{\hat G} W \cdot N = - \frac{\partial_2 \hat G}{\sqrt{1 + \left( \partial_2 \hat G \right)^2}} & \textrm{ on } B_1(G) \\
W(\cdot,z_2) = W(\cdot,z_2) & \textrm{ $L$-periodic } \\
\int_{Y^*(G)} W \, |J L_{\hat G}| \, dz_1 dz_2 = 0
\end{array}
\right. 
\end{equation}
where 
\begin{equation} \label{RF1}
(B_{\hat G} W)(z_1,z_2) = \left(
\begin{array}{c}
\hat F(z_1) \; \frac{\partial W}{\partial z_1}(z_1,z_2) - \hat F'(z_1) \; z_2 \; \frac{\partial W}{\partial z_2}(z_1,z_2) \\
- \hat F'(z_1) \; z_2 \; \frac{\partial W}{\partial z_1}(z_1,z_2) 
+ \frac{1}{\hat F(z_1)} \; ( 1 + (z_2 \; \hat F'(z_1))^2 ) \; \frac{\partial W}{\partial z_2}(z_1,z_2)
\end{array}
\right).
\end{equation}

More precisely, we have that $X(\hat G)$ is the solution of (\ref{AUXKato1}) in $Y^*(\hat G)$ 
if and only if $W(\hat G) = X(\hat G) \circ L_{\hat G}$ satisfies equation (\ref{AUXKato00}) in $Y^*(G)$.
Further, if we define $$U(\hat G)=W(\hat G)-\frac{1}{|Y^*(G)|}\int_{Y^*(G)}W(\hat G)$$
then, $U(\hat G)$ is the unique solution of the problem
\begin{equation} \label{AUXKato0}
\left\{
\begin{array}{ll}
- \frac{1}{\hat F} {\rm div }( B_{\hat G} U) = 0 & \textrm{ in } Y^*(G) \\
B_{\hat G} U \cdot N = 0 & \textrm{ on } B_2(G) \\
B_{\hat G} U \cdot N = - \frac{\partial_2 \hat G}{\sqrt{1 + \left( \partial_2 \hat G \right)^2}} & \textrm{ on } B_1(G) \\
U(\cdot,z_2) = U(\cdot,z_2) & \textrm{ $L$-periodic } \\
\int_{Y^*(G)} U  \, dz_1 dz_2 = 0
\end{array}
\right. .
\end{equation}

We also have that, for each $\hat G \in A(M)$, the variational formulation of (\ref{AUXKato0}) is given by the bilinear form 
\begin{equation} \label{BForm}
\begin{array}{rl} 
\rho_{\hat G}: H(G) \times H(G) &\mapsto \R \\
 \displaystyle (U,V) &\to \displaystyle  \int_{Y^*(G)} B_{\hat G} U \cdot \nabla \left( \frac{V}{\hat F} \right) \; dz_1 dz_2
\end{array}
\end{equation}
where $H(G)$ is the Hilbert space defined by 
\begin{eqnarray*}
H(G) & = & \left\{ U \in H^1(Y^*(G)) \, : \, U(\cdot,z_2) = U(\cdot,z_2) \textrm{ is $L$-periodic }, \, \int_{Y^*(G)}U=0 \right\}
\end{eqnarray*}
and endowed with the $H^1(Y^*(G))$ norm. 
In fact, $\rho_{\hat G}$ is a coercive bilinear form since we have imposed the condition $\int_{Y^*(G)} U = 0$ on the space $H(G)$.

Therefore, $X(\hat G)$ is the solution of (\ref{AUXKato1}) in $Y^*(\hat G)$ if and only if 
\begin{equation}\label{def-of-U}
U(\hat G) = X(\hat G) \circ L_{\hat G}-\frac{1}{|Y^*(G)|}\int_{Y^*(G)}X(\hat G) \circ L_{\hat G}
\end{equation}
satisfies 
\begin{equation}\label{weak-form-hat G}
\rho_{\hat G}(U,V) = 
- \int_{B_1(G)} \frac{\hat G'}{\sqrt{1 + ( \hat G ')^2}} \; \frac{V}{\hat F} \; d\mathcal{S},  \quad
\textrm{ for all } V \in H(G).
\end{equation}
In particular the coercive bilinear form associated to (\ref{AUXKato1}) in $Y^*(G)$ is given by 
\begin{equation}
\begin{array}{rl}
\rho_{G}: H(G) \times H(G) &\mapsto \R \\
 \displaystyle (U,V) &\to \displaystyle  \int_{Y^*(G)} \nabla  U \cdot \nabla V  \; dz_1 dz_2 \nonumber 
\end{array}
\end{equation}
in which the weak formulation is  
$$
\rho_{G}(U,V) = 
- \int_{B_1(G)} \frac{G'}{\sqrt{1 + ( G ')^2}} \; V \; d\mathcal{S}, \quad 
\textrm{ for all } V \in H(G).
$$

Thus we have performed the boundary perturbation problem given by \eqref{AUXKato1}, where the function spaces are varying with respect to $\hat G \in A(M)$, into a problem of perturbation of linear operators defined by the bilinear form \eqref{BForm} and condition \eqref{weak-form-hat G} in the fixed space $H(G)$. 
%

\begin{remark} \label{FD} 
In order to obtain the smoothness of auxiliary solutions $X$ with respect to $\hat G$, we first observe that the maps $\rho$ and $\gamma$ defined by 
\begin{equation} \label{b0G}
\begin{array}{rl}
\rho: A(M) \times H(G) \times H(G) &\mapsto \R \\
 \displaystyle (\hat G, U,V) &\to \rho_{\hat G}(U,V) 
\end{array}
\end{equation}
and 
\begin{equation} \label{b0G2}
\begin{array}{rl}
\gamma: A(M) \times H(G) &\mapsto \R \\
 \displaystyle (\hat G, V) &\to \int_{B_1(G)} \frac{\hat G'}{\sqrt{1 + ( \hat G ')^2}} \; \frac{V}{\hat F} \; d\mathcal{S} 
\end{array}
\end{equation}
are Fr\'echet differentiable in the Banach spaces $A(M) \times H(G) \times H(G)$ and $A(M) \times H(G)$.
Indeed, setting $M>0$ and $G \in A(M)$, we have for each $\hat G \in H(G)$ that $\rho(\hat G, \cdot, \cdot)$ is a coercive bilinear form in $H(G) \times H(G)$, and $\gamma(\hat G, \cdot)$ is a linear functional on $H(G)$. Moreover, due to expression \eqref{RF0}, \eqref{RF00} and \eqref{RF1}, we have that $\rho$ and $\gamma$ are rational functions with respect to $\hat G \in A(M)$ and its derivatives. Therefore, it follows from \cite{D} that $\rho$ and $\gamma$ defined by \eqref{b0G} and \eqref{b0G2} are analytic maps in $A(M) \times H(G) \times H(G)$ and $A(M) \times H(G)$ respectively (see \cite[Cchapter 2]{Henry}).
\end{remark}

\begin{proposition}\label{PAPEN}
Let us consider the family of solutions $U(\hat G)$ of problem \eqref{AUXKato0} for each $\hat G \in A(M)$ where $A(M)$ is the functional space defined in (\ref{admissible}) for some constant $M>0$. 

Then, the map $\hat G \to U(\hat G) \in H(G)$ is analytic in $A(M)$.
\end{proposition}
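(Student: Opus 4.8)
The plan is to recast Proposition \ref{PAPEN} as an application of the Implicit Function Theorem in Banach spaces, exactly in the spirit of \cite[Chapter 2]{Henry}. We have already, in the discussion preceding the statement, pulled back the $\hat G$-dependent domain $Y^*(\hat G)$ onto the fixed domain $Y^*(G)$ via the diffeomorphism $L_{\hat G}$, so that $U(\hat G) \in H(G)$ is characterized by the single equation
\begin{equation*}
\rho_{\hat G}(U,V) + \gamma(\hat G, V) = 0, \quad \textrm{ for all } V \in H(G).
\end{equation*}
First I would introduce the map $\mathcal{F}: A(M) \times H(G) \to H(G)'$ (or, using the Riesz representation provided by the coercive form $\rho_G$, a map valued in $H(G)$ itself) defined by $\langle \mathcal{F}(\hat G, U), V\rangle = \rho_{\hat G}(U,V) + \gamma(\hat G,V)$, so that $\mathcal{F}(\hat G, U(\hat G)) = 0$ by \eqref{weak-form-hat G}. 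The analyticity of $\rho$ and $\gamma$ in their arguments — established in Remark \ref{FD} from the fact that, through \eqref{RF0}, \eqref{RF00} and \eqref{RF1}, the coefficients of $\rho_{\hat G}$ and the integrand of $\gamma$ are rational in $\hat G$ and $\hat G'$ with denominators bounded away from zero on $A(M)$ — immediately gives that $\mathcal{F}$ is analytic on $A(M) \times H(G)$.

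The key step is then to verify that the partial derivative $D_U \mathcal{F}(\hat G, U)$ is an isomorphism of $H(G)$ onto $H(G)'$ for each fixed $\hat G \in A(M)$. Since $\rho_{\hat G}(U,V)$ is linear in $U$ and $\gamma(\hat G,V)$ does not depend on $U$, one has $\langle D_U\mathcal{F}(\hat G,U)\,\xi, V\rangle = \rho_{\hat G}(\xi, V)$; so what must be checked is precisely that $\rho_{\hat G}: H(G)\times H(G) \to \R$ is bounded and coercive. Boundedness follows from the uniform bounds \eqref{RF00} on $\hat F = \hat G/G$ and on $\hat F'$ (the latter being controlled because $|\hat G'|, |G'| \le M$ and $G \ge G_0$), together with the $H^1$-continuity of the trace onto $B_1(G)$. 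Coercivity is the point that needs a small argument: writing out $\rho_{\hat G}(U,U)$ and completing the square in $\partial_{z_1}U$, $\partial_{z_2}U$ one finds that $\rho_{\hat G}(U,U)$ controls $\int_{Y^*(G)} |\nabla U|^2$ from below by a constant depending only on $G_0, G_1, M$; then, since $H(G)$ consists of functions with zero mean on $Y^*(G)$, the Poincaré–Wirtinger inequality upgrades this to control of $\|U\|_{H^1}^2$. This is exactly where the normalization $\int_{Y^*(G)} U = 0$ built into $H(G)$ is used, and it is the same mechanism that makes $U(\hat G)$ (rather than $W(\hat G)$) well-defined and unique via Lax–Milgram.

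With $\mathcal{F}$ analytic and $D_U\mathcal{F}(\hat G, U(\hat G))$ invertible at every point of the solution branch, the analytic Implicit Function Theorem (see \cite[Chapter 2]{Henry}, \cite{D}) yields that $\hat G \mapsto U(\hat G)$ is analytic from $A(M)$ into $H(G)$ in a neighborhood of each $\hat G \in A(M)$; since $A(M)$ is connected (it is convex), this is a global statement on $A(M)$. I would finish by noting, for later use in Corollary \ref{PAPEN2}, that differentiating the identity $\mathcal{F}(\hat G, U(\hat G)) = 0$ gives $D_U\mathcal{F}\cdot \partial_{\hat G}U = -\partial_{\hat G}\mathcal{F}$, an explicit elliptic problem for $\partial_{\hat G} U$ in the fixed cell. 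The main obstacle is the coercivity estimate for $\rho_{\hat G}$ uniformly over $A(M)$: one must be a little careful that the cross terms involving $\hat F' z_2 \,\partial_{z_1}U\,\partial_{z_2}U$ in \eqref{RF1}, after the change of the test function $V \mapsto V/\hat F$, do not destroy positivity — this is handled by Cauchy–Sch- warz with a weight, absorbing the cross term into the diagonal terms at the cost of constants depending on $M$, $G_0$, $G_1$ only, so everything stays uniform on $A(M)$.
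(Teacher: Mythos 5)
Your proposal is correct and follows essentially the same route as the paper: the identity $\langle \mathcal{F}(\hat G,U),V\rangle=\rho_{\hat G}(U,V)+\gamma(\hat G,V)$ is exactly the paper's map $\Phi(\hat G,U)=Z(\hat G)U+S(\hat G)$, the analyticity input is Remark \ref{FD}, and the conclusion comes from the analytic Implicit Function Theorem with invertibility of $D_U\mathcal{F}$ supplied by the coercivity of $\rho_{\hat G}$ on the zero-mean space $H(G)$. The only difference is that you spell out the boundedness and coercivity verification (Poincar\'e--Wirtinger plus absorption of the cross terms), which the paper simply asserts in the text preceding the proposition.
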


\begin{proof}
Using the applications $\rho(\hat G, \cdot,\cdot)$ is a coercive bilinear form, and $\gamma(\hat G, \cdot)$ is a linear functional, we can define the linear operator $Z(\hat G): H(G) \mapsto H(G)'$, and the linear functional $S(\hat G): H(G) \mapsto \R$ given by $<Z(\hat G) \phi, \varphi> = \rho(\hat G, \phi, \varphi)$ and  $S(\hat G) \varphi = \gamma(\hat G, \varphi)$ such that 
$$
Z(\hat G) U(\hat G) + S(\hat G) = 0, \quad  \textrm{ for all } \hat G \in A(M)
$$ 
where $H(G)'$ is the dual space of $H(G)$ and $U(\hat G)$ is the unique solution of problem \eqref{AUXKato0} for each $\hat G \in A(M)$ that satisfies expression \eqref{weak-form-hat G}.

Hence, due to Remark \ref{FD}, we have that the desired result is a direct consequence of the Implicit Functional Theorem applied to the map
$$
\begin{array}{rl}
\Phi: A(M) \times H(G) &\mapsto H(G)' \\
 \displaystyle (\hat G, U) &\to Z(\hat G) \, U+ S(\hat G).
\end{array}
$$

\end{proof}

Consequently we have the following result:

\begin{corollary}\label{PAPEN2}
Let $T^\epsilon: \R^2 \mapsto \R^2$ be the diffeomorphism defined in \eqref{DIFFEO} for each $\epsilon > 0$, and consider the family of solutions $X(\hat G)$ given by the problem \eqref{AUXKato1} for $\hat G \in A(M)$ where $A(M)$ is the functional space defined in (\ref{admissible}) for some constant $M>0$. 

Then, the map $\hat G \to X(\hat G) \circ T^\epsilon \in H^1(R^\epsilon)$ is analytic in $A(M)$.
\end{corollary}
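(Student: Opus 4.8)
The plan is to deduce Corollary \ref{PAPEN2} from Proposition \ref{PAPEN} by tracking how the composition with the fixed diffeomorphism $T^\epsilon$ interacts with the analytic dependence on $\hat G$. First I would recall that Proposition \ref{PAPEN} gives analyticity of $\hat G \mapsto U(\hat G) \in H(G)$, where $U(\hat G)$ is the normalized pullback defined in \eqref{def-of-U}; undoing the normalization and the change of variables $L_{\hat G}$ recovers $X(\hat G)$ on $Y^*(\hat G)$. So the first step is to show that $\hat G \mapsto X(\hat G) \in H^1(Y^*(\hat G))$ inherits analyticity, despite the fact that the target Hilbert space itself moves with $\hat G$: this is handled exactly as in \cite[Chapter 2]{Henry} and \cite[Proposition A.2]{AP2}, by composing with $L_{\hat G}$ to work in the fixed space $H(G)$, adding back the constant $|Y^*(G)|^{-1}\int_{Y^*(G)} X(\hat G)\circ L_{\hat G}$ (which depends analytically on $\hat G$ since $U(\hat G)$ does and the constant is extracted by a bounded linear functional applied to $W(\hat G)$, itself an analytic perturbation of $U(\hat G)$ via the analytic right-hand side $\gamma$), and then transporting back by $L_{\hat G}^{-1}$, whose coefficients are rational in $\hat G$ and hence analytic.

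Next I would introduce the scaling map $T^\epsilon$ from \eqref{DIFFEO} and observe that for fixed $\epsilon > 0$ it is a fixed linear diffeomorphism independent of $\hat G$; moreover $R^\epsilon$ (after the periodic extension to the band $Y(x)$ described around \eqref{eq:diff-chang}) is carried onto the band $Y(\hat G)$ by $T^\epsilon$. Composition on the right with a fixed bi-Lipschitz diffeomorphism is a bounded linear map between the corresponding Sobolev spaces, so it preserves analyticity: writing $X(\hat G)\circ T^\epsilon$ as the result of applying this fixed bounded linear operator to $X(\hat G)$, the map $\hat G \mapsto X(\hat G)\circ T^\epsilon \in H^1(R^\epsilon)$ is the composition of an analytic map with a bounded linear map, hence analytic on $A(M)$. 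A small point to address carefully is the periodic extension: since $X(\hat G)$ is $L$-periodic in the first variable and $R^\epsilon$ (rescaled) tiles into roughly $1/(\epsilon L)$ copies of the basic cell, the extension-then-restriction operator is bounded from $H^1(Y^*(G))$-type spaces into $H^1(R^\epsilon)$ with norm depending on $\epsilon$ but not on $\hat G$, so analyticity is not lost.

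I expect the main obstacle to be bookkeeping rather than anything conceptually deep: carefully justifying that ``analyticity in the moving space $H^1(Y^*(\hat G))$'' is a meaningful and correct statement, and that the passage through $L_{\hat G}$, the de-normalization, and the fixed scaling $T^\epsilon$ each preserve it. The cleanest way to organize this is to never leave the fixed space $H(G)$ until the very end: prove analyticity of $\hat G \mapsto U(\hat G)$ (done in Proposition \ref{PAPEN}), then of $\hat G \mapsto W(\hat G) = U(\hat G) + c(\hat G)$ with $c(\hat G)$ an analytic scalar, then note $X(\hat G)\circ T^\epsilon = (W(\hat G)\circ L_{\hat G}^{-1})\circ T^\epsilon = W(\hat G)\circ(T^\epsilon \circ L_{\hat G}^{-1})$; the composite $T^\epsilon\circ L_{\hat G}^{-1}$ has components that are rational in $\hat G$ and its derivative, so the pullback operator it induces is an analytic family of bounded operators in $\hat G$, and the stated analyticity follows from the product rule for analytic maps. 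Finally I would remark that taking one derivative in $\hat G = G(x,\cdot)$ along the $C^1$ curve $x \mapsto G(x,\cdot) \in A(M)$ (legitimate by $(H2)$) yields the function $\partial_x X$ used in Section \ref{SFC}, which is the reason the corollary is stated.
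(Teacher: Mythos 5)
Your route is the one the paper itself intends: Corollary \ref{PAPEN2} is stated there with no written proof, as an immediate consequence of Proposition \ref{PAPEN}, and your plan (use analyticity of $\hat G\mapsto U(\hat G)\in H(G)$, undo the normalization \eqref{def-of-U} by an analytically varying constant, transport back through $L_{\hat G}$, and then compose with the fixed scaling $T^\epsilon$) is the natural fleshing-out of that deduction. Two small inaccuracies are easily repaired: the composite you need is $L_{\hat G}^{-1}\circ T^\epsilon$, not $T^\epsilon\circ L_{\hat G}^{-1}$, since $X(\hat G)\circ T^\epsilon=W(\hat G)\circ L_{\hat G}^{-1}\circ T^\epsilon$; and the de-normalization constant should be computed directly from $U(\hat G)$ rather than from $W(\hat G)$ (your phrasing is circular): the constraint $\int_{Y^*(G)}W\,\hat F\,dz_1dz_2=0$ gives $c(\hat G)=-\big(\int_{Y^*(G)}U(\hat G)\,\hat F\big)\big/\big(\int_{Y^*(G)}\hat F\big)$, which is analytic because $\hat F=\hat G/G$ is rational in $\hat G$ and bounded away from zero by \eqref{RF00}.

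The genuine gap is the final step, where you assert that the pullback by the $\hat G$-dependent map is ``an analytic family of bounded operators'' on $H^1$ and conclude by a product rule. For a fixed $V\in H^1$, the map $\hat G\mapsto V\circ L_{\hat G}^{-1}$ is in general not even differentiable into $H^1$: the formal derivative in a direction $h$ is $\partial_z V$ composed with the map, multiplied by the variation of the second component, so it costs one extra derivative of $V$; correspondingly, the composition operators, though bounded on $H^1$ for each fixed $\hat G$, do not vary analytically (or differentiably) with $\hat G$ in any operator topology on $H^1$. Hence ``analytic $W(\hat G)$ in $H(G)$'' times ``analytic operator family'' is not a valid factorization as written. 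To close it you must either invoke additional regularity of $X(\hat G)$ (interior smoothness of the harmonic function, plus whatever boundary regularity the $C^1$ graph permits), or stay in the fixed frame and differentiate the explicit pulled-back expression $W(\hat G)\big(x_1/\epsilon,\,(x_2/\epsilon)\,G(x_1/\epsilon)/\hat G(x_1/\epsilon)\big)$ term by term, checking that what Section \ref{SFC} actually uses (continuity of $s\mapsto X(s)\circ T^\epsilon$ in $H^1(R^\epsilon)$ and $L^2(R^\epsilon)$ bounds on $\partial_x X$, $\partial_y X$, $\partial_z X$) makes sense; this is weaker than the full analyticity you claim. There is also a definedness point that you (and the paper) gloss over: for a general fixed $\hat G\in A(M)$, $T^\epsilon$ does not carry $R^\epsilon$ into the band bounded above by $\hat G$ (the image has upper boundary $G(\epsilon y,y)$), so $X(\hat G)\circ T^\epsilon$ is only defined on all of $R^\epsilon$ after a suitable extension, or along the diagonal choice $\hat G=G(x_1,\cdot)$ actually used in \eqref{XonR}.
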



\end{document}